\title{Blow-up for the one dimensional stochastic wave equations
 }
\begin{document}
\allowdisplaybreaks
\author{WeiJun Deng\footnotemark[2]}

\maketitle
\renewcommand{\thefootnote}{\fnsymbol{footnote}}
\footnotetext[2]{School of Mathematics and Statistics,
 Central South University, Changsha, Hunan 410083, P.R.China
        ({\tt $ weijundeng@csu.edu.cn $}).}

\begin{abstract}
The paper is concerned with the problem of explosive solutions for
a class of semilinear stochastic wave equations.
The challenging open problem(\cite{CMullR}) which is raised by C.Mueller and G.Richards
is included in this  problem.We develop an $\Omega_\delta$-comparative approach.
With the aid of new approach,
under appropriate conditions on the initial data and the nonlinear multiplicative noise
term $(c_2u+f(u)) \dot{W}(t,x)$
with $|f(u)|\geq \kappa |u|^r,r>1,\kappa>0$,
we prove in Theorem 3.1 that the solutions to the stochastic wave equation will blow up in finite
time with positive probability.
\end{abstract}

\begin{keywords}
semilinear stochastic wave equation,finite time blow-up,comparative approach,
2-parameter white noise.
\end{keywords}

\begin{AMS} 60H15,60H30,35L05,35L15,35R60. \end{AMS}

\newcommand{\R}{\mathbb{R}}
\newcommand{\Z}{\mathbb{Z}}
\newcommand{\N}{\mathbb{N}}
\def\qed{\hfill $\Box$ \smallskip}
\textheight 23cm \textwidth 16cm
\def\baselinestretch{1.3}
\oddsidemargin 0pt \evensidemargin 0pt \topmargin 0pt
\renewcommand{\theequation}{\thesection.\arabic{equation}}
\newcommand{\proofbox}{\hspace*{.2in}\raisebox{3pt}
               {\fbox{\rule{0pt}{.7pt}\rule{.7pt}{0pt}}}}
\baselineskip 8pt
\setlength{\arraycolsep}{4pt}

\newtheorem{thm}{Theorem}[section]
\newtheorem{cor}[thm]{Corollary}
\newtheorem{lem}[thm]{Lemma}
\newtheorem{prop}[thm]{Proposition}
\newtheorem{defn}[thm]{Definition}
\newtheorem{rem}[thm]{Remark}
\newtheorem{ex}[thm]{Example}
\def\lb{\label}
\def\x#1{{\rm (\ref{#1})}}

\section{Introduction}
Consider the initial value problem for the nonlinear stochastic wave equation:
\begin{equation}
\begin{aligned}
&\partial^2_tu=\partial^2_xu+c_1u+(c_2u+f(u)) \dot{W}(t,x),t>0,x\in \overline{D},\\
&u(0,x)=(J+T+1)(1+u_0(x)),\partial_tu(0,x)=(J+T+1)v_0(x).\\
\end{aligned} \label{00}
\end{equation}
Here $f(u)$ is locally Lipschitz function and satisfy
$$|f(u)|\geq \kappa |u|^r,r>1,\kappa>0,$$
$c_1,c_2 $ are given constants,$D=(0,J)\subset R $,
$\dot{W}(t,x)$ is 2-parameter white noise.

In this article we want to study blow-up phenomena:does solutions to stochastic
wave equation(\ref{00}) finite time blow-up occur with positive probability?
It is expected that such a white noise has a strong influence on the solutions which blow-up.
The challenging open problem(\cite{CMullR}) which is raised by C.Mueller and G.Richards is included in this  problem.

For deterministic nonlinear partial differential equations,there is a very extensive literature on
blow-up in finite time.Let us just mention a few:(\cite{JMBall,cakarelliFriedman,HFujita,YGigaRVKohn,RTGlassey,RTGy,TKato,HALevine,HALevi,FMerle}),for example.

On the other hand, for stochastic partial differential equations (SPDE),
there are very few papers about finite time blow-up.
It is mathematically very difficult to consider space-time white noises,this
is due to the lack of smoothing effect in the stochastic differential equation.
We refer the reader to (\cite{CMull,CMul,CM,MDozziJALopez,PLChow,CMu}) for new developments.

Our strategy to study the blow-up is based on the $\Omega_\delta$-comparative approach.
We divide our proof in five steps.

First,we introduce a blow-up lemma for one dimensional semilinear wave equations.
Next,we establish a comparison lemma on semilinear wave equations.
Another step in the proof is we need to verify the essential supremum of the solution of (\ref{00})
over a subset $\Omega-\Omega_\delta$ of the probability space,will blow up in finite time.
We utilize the close relationship between stochastic
partial differential equations and deterministic partial differential equations.
Using reduction to absurdity method,suppose that on the contrary,
the essential supremum of the solution of (\ref{00}) exists for a long time
over a subset $\Omega-\Omega_\delta$ of the probability space.
Consider the deterministic partial differential equations:
\begin{equation}
\begin{array}{l}\partial^2_t u=\partial^2_x u+\frac{\kappa^2}{4}|u|^r-\frac{c_1^2+c_2^2}{2}u,t>0,x\in D,\\
u(t,x)=0,\mbox{for }~x\in \partial D,~t\geq 0,\\
u(0,x)=u_0(x),\partial_t u(0,x)=v_0(x),\label{000}
\end{array}
\end{equation}
suppose that $|u|^r$ increases fast,in other words,$r>1 $.
By the first step,under appropriate conditions on the initial data
solutions of (\ref{000}) will blow up in finite time.
Then we construct a comparison
between square moment of the solution of (\ref{00})
over the subset $\Omega-\Omega_\delta$
and solution of (\ref{000}),
apply the previous comparison lemma to get conflicting results.
The fourth steps to do in the proof is that
the essential infimum of the blow-up time of the solution of (\ref{00}) is bounded.
Finally,we show that the solutions of (\ref{00}) will blow up in bound time with positive probability.

With the aid of the $\Omega_\delta$-comparative approach,
under appropriate conditions on the initial data and the nonlinear multiplicative noise
term $(c_2u+f(u)) \dot{W}(t,x)$
with $|f(u)|\geq \kappa |u|^r,r>1,\kappa>0$,
we prove in Theorem 3.1 that the solution  of the stochastic wave equation will blow up in bound
time with positive probability.

The rest of this paper is organized as follows.
We shall first give problem statement
and preliminaries in Section 2.
Then, in Section 3,we develop a comparative approach and
prove the main theorem (Theorem 3.1).

\section{Problem statement and preliminaries}
\subsection{Problem statement}
Let $\Omega$ be an uncountable Polish space with the metric $\gamma$ and $\mathcal F=\mathcal B(\Omega)$ be the topological $\sigma$-field.
Suppose $\dot{W}= \{\dot{W}(t,x), t \in [0,+\infty),x \in \overline{D}\}$
is a 2-parameter white noise defined on the complete probability space $(\Omega,\mathcal F,\mathbb{P})$.

Let us discuss the rigorous meaning of solution to (\ref{00}), and the definition of finite time
blow-up.We regard (\ref{00}) as short-hand for the following integral equation
\begin{equation}
\begin{aligned}
u(t,x)=& \int_DK(t,x-y)(J+T+1)(1+u_0(y))dy+\int_DS(t,x-y)(J+T+1)v_0(y)dy \\
& +c_1(S\ast u)(t,x)+\int_0^t\int_DS(t-s,x-y)(c_2u+f(u))\dot{W}(ds,dy),\label{201}
\end{aligned}
\end{equation}
which may only be well-defined for a small time (see below).Here $*$ denotes convolution,i.e.
\begin{equation}
(S\ast u)(t,x)\triangleq
\int_0^t\int_DS(t-s,x-y)u(s,y)dsdy.
\nonumber
\end{equation}
$K(t,x)$ and $S(t,x)$ are
the wave kernels:
\begin{equation}
\begin{aligned}
& K(t,x)\triangleq {\textstyle\sum\limits_{n \in Z}}\frac{1}{2}(\delta(x+t+nJ)+\delta(x-t+nJ)), \\
& S(t,x)\triangleq {\textstyle\sum\limits_{n \in Z}}\frac{1}{2}I_{[-t,t]}(x+nJ), \label{22}
\end{aligned}
\end{equation}
where $\delta(\cdot)$ is the delta function.

The above formula for $K(t,x)$ should be interpreted in the
sense of Schwartz distributions.
One can define a solution to (\ref{00}) in terms of distributions and then show
that such a solution exists if and only if (\ref{201}) holds.
The integral in (\ref{201}) involving $\dot{W}(\cdot,\cdot)$ should be interpreted in the
sense of Walsh's theory of martingale measures (see \cite{JBWalsh},chapter 2).By standard arguments
(e.g. see Theorem 3.2 and exercise 3.7 in Walsh \cite{JBWalsh}) (\ref{201}) has a unique continuous solution $u(t,x)$ valid
for $t < \sigma_L$, where
\begin{equation}
\sigma_L\triangleq inf\{t > 0 : \underset{x\in \overline{D}} {sup}|u(t,x)|\geq L\}
\end{equation}
and the infimum of the empty set is taken to be $+\infty.$Letting $L \rightarrow +\infty,$
we conclude that (\ref{00}) has a unique solution for $t< \sigma ,$
where
\begin{equation} \sigma \triangleq \underset{L\rightarrow +\infty}{lim}\sigma_L. \label{0221}\end{equation}

It follows that, if $ \sigma <+\infty,$then
$\underset {t \rightarrow \sigma-}{lim}\underset {x\in \overline{D}}{sup}|u(t,x)|=+\infty.$
With these definitions in place,if $\mathbb P(\sigma<+\infty)>0$,
we say that solutions to (\ref{00}) blow up in finite time with positive probability.

\subsection{Preliminaries}
We shall use the following Lemmas:
\begin{lemma}
\label{2004}
Consider the following initial value problem for the nonlinear stochastic wave equation:
\begin{equation}
\begin{array}{l}\partial^2_tu=\partial^2_xu+g_1(u)+f_1(u)\dot{W}(t,x),t\geq0,\\
u(0,x)=u_1(x),\partial_tu(0,x)=v_1(x),\\ \label{2005}
\end{array}
\end{equation}
The function $ g_1,f_1:R \rightarrow R$ are measurable and there exists a constant $L,K>0$,
such that for $ \phi_1(u)=f_1(u),g_1(u)$,
\begin{equation}
|\phi_1(u)-\phi_1(v)|\leq L|u-v|,~~|\phi_1(u)| \leq K(1+|u|),  \label{2006}
\end{equation}
 $u_1(x),v_1(x)\in C^1(R)$.Then the equation (\ref{2005}) has a unique solution,which has a H$\ddot{o}$lder continuous version.
\end{lemma}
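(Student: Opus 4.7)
The plan is to recast \eqref{2005} in mild form using the d'Alembert wave kernels, prove existence and uniqueness by a Picard iteration in the space of adapted $L^2(\mathbb{P})$-bounded random fields, and then extract a H$\ddot{o}$lder continuous version via a Kolmogorov-type moment bound. Concretely I would rewrite \eqref{2005} as the Walsh integral equation
\begin{equation*}
u(t,x) = \tfrac{1}{2}[u_1(x+t)+u_1(x-t)] + \tfrac{1}{2}\!\int_{x-t}^{x+t}\! v_1(y)\,dy + \int_0^t\!\!\int_{\mathbb{R}}\! \Phi(t-s,x-y)\, g_1(u(s,y))\,dy\,ds + \int_0^t\!\!\int_{\mathbb{R}}\! \Phi(t-s,x-y)\, f_1(u(s,y))\,W(ds,dy),
\end{equation*}
where $\Phi(\tau,z)=\tfrac{1}{2}\mathbf{1}_{\{|z|\leq\tau\}}$ is the one-dimensional wave fundamental solution and the last integral is understood in Walsh's martingale-measure sense \cite{JBWalsh}.

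Next I would construct Picard iterates $u^{(n+1)}$ by plugging $u^{(n)}$ (starting from $u^{(0)}$ equal to the deterministic part) into the right-hand side. The linear growth bound in \eqref{2006}, together with Walsh's $L^2$-isometry and the elementary identity $\|\Phi(\tau,\cdot)\|_{L^2}^2=\tau/2$, yields
\begin{equation*}
\sup_{x\in\mathbb{R}}\mathbb{E}|u^{(n+1)}(t,x)|^2 \leq C_T + C\!\int_0^t \sup_{y\in\mathbb{R}}\mathbb{E}|u^{(n)}(s,y)|^2\,ds,\qquad t\in[0,T],
\end{equation*}
so Gronwall's inequality bounds the iterates uniformly in $n$. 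The Lipschitz condition in \eqref{2006} gives the same type of inequality for the consecutive differences $u^{(n+1)}-u^{(n)}$, so $(u^{(n)})$ is Cauchy in the norm $\sup_{s\leq t,y}\mathbb{E}|\cdot|^2$ and converges to an adapted field $u$ solving the mild equation. Uniqueness follows from the same Gronwall argument applied to the difference of two putative solutions.

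For the H$\ddot{o}$lder modification I would invoke Kolmogorov's continuity criterion. The deterministic contribution is Lipschitz in $(t,x)$ under the $C^1$ hypothesis on $u_1,v_1$. For the stochastic convolution $I(t,x)$, Burkholder--Davis--Gundy with exponent $p$ combined with the isometry and the linear growth bound gives
\begin{equation*}
\mathbb{E}|I(t,x)-I(t',x')|^p \leq C_{p,T}\,\mathbb{E}\!\left(\int_0^{T}\!\!\int_{\mathbb{R}}|\Phi(t-s,x-y)-\Phi(t'-s,x'-y)|^2\,dy\,ds\right)^{p/2},
\end{equation*}
and a direct computation with the indicator $\Phi$ controls the inner $L^2$-difference by $C(|t-t'|+|x-x'|)$; taking $p>4$ produces joint H$\ddot{o}$lder continuity with any exponent in $(0,1/2)$. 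The main technical obstacle, as always in Walsh's framework, is keeping the iterates predictable so each stochastic integral is legitimately defined while producing moment bounds whose constants are independent of $n$; beyond that the argument is entirely classical, which is presumably why the paper simply cites Theorem 3.2 and Exercise 3.7 of \cite{JBWalsh}.
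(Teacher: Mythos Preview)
Your sketch is correct and is exactly the argument the paper defers to: the paper's own ``proof'' is a one-line citation of Walsh \cite{JBWalsh}, p.~323, Exercise~3.7, with the details omitted, and what you have written is precisely the standard resolution of that exercise (mild formulation with the d'Alembert kernel, Picard iteration controlled by Gronwall via Walsh's isometry, then BDG plus Kolmogorov for the H\"older modification). You even anticipated this, so there is nothing to add.
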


\begin{proof}
Existence,uniqueness and H$\ddot{o}$lder continuous of the solution to the non-linear stochastic wave equation (\ref{2005})
is covered in (\cite{JBWalsh},p.323,Exercise 3.7).The proof is omit.
\end{proof}

\begin{lemma}
\label{L20}
Let $ u(t,x):I \times D \rightarrow R $ be a solution of the following
initial-boundary value problem for the nonlinear wave equation:
\begin{equation}
\label{021}
\begin{array}{l}\partial^2_t u=\partial^2_x u+\frac{\kappa^2}{4}|u|^r-\frac{c_1^2+c_2^2}{2}u,t>0,x\in D,\\
u(t,x)=0,\mbox{for }~x\in \partial D,~t\geq 0,\\
u(0,x)=u_0(x),\partial_t u(0,x)=v_0(x),
\end{array}
\end{equation}
for which $u_0(x),v_0(x) \in C^{\infty}(D),r>1$.
Then  $u(t,x) \in C^{\infty}(I \times  D)$.

\end{lemma}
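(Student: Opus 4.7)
My approach is a standard bootstrap regularity argument for the semilinear wave equation (\ref{021}). The first step is to rewrite the initial--boundary value problem in its Duhamel form using the wave kernels $K$ and $S$ introduced in (\ref{22}):
\[
u(t,x)=\int_D K(t,x-y)u_0(y)\,dy+\int_D S(t,x-y)v_0(y)\,dy+\int_0^t\!\int_D S(t-s,x-y)N(u(s,y))\,ds\,dy,
\]
where $N(u)=\frac{\kappa^2}{4}|u|^r-\frac{c_1^2+c_2^2}{2}u$. The homogeneous Dirichlet boundary condition is absorbed by the standard odd-periodic reflection implicit in the definition of $K,S$, so the free-space d'Alembert representation applies to the odd extensions of $u_0,v_0$, which are smooth away from $\partial D$ and, by the usual compatibility conditions inherited from $u_0,v_0\in C^\infty(D)$, smooth across $\partial D$ as well.

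Next, I would establish local existence of a classical solution $u\in C^2([0,\tau]\times\overline D)$ by a Picard iteration applied to the Duhamel equation. Since $r>1$, the map $u\mapsto N(u)$ is locally Lipschitz on bounded sets of $C^2$, and convolution with $S$ gains one derivative in space and, by using the equation, one derivative in time; so for small $\tau>0$ the iteration is a contraction and yields a unique local classical solution. Standard continuation then extends this solution to the full existence interval $I$ posited in the lemma.

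The heart of the proof is the bootstrap. Assuming inductively that $u\in C^k(I\times D)$ with $k\geq 2$, one verifies that $N(u)\in C^k$, and then concludes from the Duhamel representation---whose first two terms are $C^\infty$ because $u_0,v_0\in C^\infty$, and whose third term gains a derivative through the kernel $S$---that every spatial derivative of $u$ up to order $k+1$ exists and is continuous. The equation itself, $\partial_t^2u=\partial_x^2u+N(u)$, then converts this spatial regularity into mixed and temporal regularity of the same order, so $u\in C^{k+1}(I\times D)$. Iterating in $k$ gives $u\in C^\infty(I\times D)$.

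The main obstacle is the composition regularity of $|u|^r$ at zeros of $u$: when $r$ is not an even integer, the scalar function $t\mapsto|t|^r$ is only $C^{\lfloor r\rfloor}$ at the origin, so one cannot naively claim that $|u|^r$ is $C^k$ whenever $u$ is. To push the bootstrap all the way to $C^\infty$, one exploits that near any zero of $u$ the composite $|u|^r$ vanishes to sufficiently high order (because $u$ itself is smooth and vanishes there), so that each successive derivative of $|u|^r$ either lies in the smooth regime $\{u\neq 0\}$ or extends continuously to the zero set via a direct estimate using the Taylor expansion of $u$. This compositional issue, rather than any estimate on the wave kernels themselves, is the delicate point of the argument.
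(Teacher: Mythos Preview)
The paper's own proof is a two-line citation of Proposition~3.1 (p.~433) and Corollary~1.6 (p.~421) in Taylor's \emph{PDE~III}; it does not spell out any bootstrap. Your Duhamel-plus-bootstrap scheme is the standard route for a semilinear wave equation with a \emph{smooth} nonlinearity, and the gain-of-one-derivative mechanism through $S$ in one space dimension is fine.

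The genuine gap is precisely where you say the ``delicate point'' lies, and your proposed resolution does not work. The claim that $|u|^r$ extends smoothly across zeros of $u$ because ``$u$ itself is smooth and vanishes there'' is false. Take any $u$ with a simple interior zero, e.g.\ locally $u(t,x)=x-a$; then $|u|^r=|x-a|^r$, which for $r$ not an even integer is only $C^{\lfloor r\rfloor}$ at $x=a$, no matter how smooth $u$ is. Vanishing of $u$ gives \emph{decay} of $|u|^r$ (namely $|u|^r=O(\mathrm{dist}^{\,r})$ at a simple zero), not additional \emph{differentiability}: the $(\lfloor r\rfloor+1)$-th derivative of $|u|^r$ is still discontinuous or singular there. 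Hence your inductive step ``$u\in C^k\Rightarrow N(u)\in C^k$'' fails once $k>\lfloor r\rfloor$, and the bootstrap stalls at finite regularity. To make a direct argument go through you would need an extra ingredient---for example a sign argument giving $u>0$ on $I\times D$ so that $|u|^r=u^r$ is genuinely smooth in $u$---which is not part of the hypotheses as stated. (The results the paper cites from Taylor likewise assume a smooth nonlinearity, so this caveat attaches to the lemma itself, not only to your proof.)
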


\begin{proof}
Applying Proposition 3.1 of (\cite{Taylor},P433) and
Corollary 1.6 of (\cite{Taylor},P421),we can easily prove the conclusion.
\end{proof}

\begin{lemma}
\label{2017}(\cite{RTGlassey},P185,Lemma 1.1)
Let $\phi(t)\in C^2$satisfy
$$\ddot{\phi}\geq h(\phi),t\geq 0,$$
with $\phi(t)=\alpha>0,\dot{\phi}(0)=\beta>0.$ Suppose that $h(s)\geq 0$
for all $s\geq \alpha.$ Then

(1) $\dot{\phi}(t)>0$ wherever $\phi(t)$ exists;and

(2) the inequality
$$
t\leq
\int^{\phi(t)}_\alpha[\beta^2+2\int^{s}_\alpha h(\xi)d\xi]^{\frac{-1}{2}}ds
$$
obtains.
\end{lemma}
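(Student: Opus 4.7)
The plan is a standard energy-multiplier argument for the second-order ODE inequality, carried out in two stages that mirror the two conclusions. The key observation is that multiplying $\ddot\phi \ge h(\phi)$ by $\dot\phi$ produces something one can integrate, provided $\dot\phi$ keeps its sign and $\phi$ stays in the region $\{s\ge\alpha\}$ where $h\ge 0$.

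First I would prove (1) by a continuity / contradiction argument. Since $\dot\phi(0)=\beta>0$, the set $T=\{t\ge 0: \dot\phi(s)>0\text{ on }[0,t]\}$ is a nonempty interval; let $t^\ast=\sup T$ inside the maximal existence interval of $\phi$. On $[0,t^\ast)$ we have $\phi(t)\ge\alpha$, hence $h(\phi(t))\ge 0$, so $\ddot\phi\ge 0$, so $\dot\phi$ is nondecreasing; in particular $\dot\phi(t)\ge\beta>0$ on $[0,t^\ast)$. By continuity $\dot\phi(t^\ast)\ge\beta>0$, which contradicts maximality unless $t^\ast$ equals the right endpoint of the existence interval. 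This yields (1).

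Next, with (1) in hand, multiply the inequality $\ddot\phi\ge h(\phi)$ by $\dot\phi>0$ and integrate from $0$ to $t$. The left side gives $\tfrac12\bigl(\dot\phi(t)^2-\beta^2\bigr)$, while the right side becomes $\int_0^t h(\phi(s))\dot\phi(s)\,ds=\int_\alpha^{\phi(t)}h(\xi)\,d\xi$ by the change of variables $\xi=\phi(s)$ (legal because $\phi$ is strictly increasing by (1)). Rearranging,
\begin{equation*}
\dot\phi(t)\;\ge\;\Bigl[\beta^2+2\!\int_\alpha^{\phi(t)}\!h(\xi)\,d\xi\Bigr]^{1/2},
\end{equation*}
and since the bracket is strictly positive (it is at least $\beta^2$), dividing yields $\dot\phi(t)\bigl[\beta^2+2\int_\alpha^{\phi(t)}h(\xi)\,d\xi\bigr]^{-1/2}\ge 1$. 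Integrating this from $0$ to $t$ and again substituting $s=\phi(\tau)$ produces
\begin{equation*}
t\;\le\;\int_\alpha^{\phi(t)}\Bigl[\beta^2+2\!\int_\alpha^{s}\!h(\xi)\,d\xi\Bigr]^{-1/2}ds,
\end{equation*}
which is exactly (2).

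The only real obstacle is justifying the sign condition $\dot\phi>0$ rigorously so that the change of variables and the square root are valid; once (1) is established the remainder is routine. No subtlety is required beyond the elementary fact that $\phi$ is $C^2$ on the interval where we are integrating, so all of the above manipulations are standard.
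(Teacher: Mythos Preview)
Your proof is correct and is exactly the classical energy-multiplier argument from Glassey's paper; the paper itself does not prove this lemma but merely quotes it from \cite{RTGlassey}, so there is no in-paper proof to compare against. Your two-stage argument (continuity to secure $\dot\phi>0$, then multiply by $\dot\phi$, integrate, and change variables twice) is precisely the standard route.
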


We define the collection
$$
\{\dot{W}(A)=\int_A \dot{W}(dsdx)| A \mbox{ be Borel subset of } [0,t]\times \overline{D}\},
$$
as a centered Gaussian random field with covariance given by
$$
\mathbb E[\dot{W}(A)\dot{W}(B)]=\pi(A \cap B),
$$
where $\pi$ denotes the Lebesgue measure on $\mathbb{\mathbb{R}}_{+}\times \overline{D}.$

We define for each $t>0$ the $\sigma$-algebra
$$
\mathcal G_{t}=\sigma \{\dot{W}(A)| A \mbox{ be Borel subset of } [0,t]\times \overline{D}\} \vee \mathcal N,
~~\mathcal F_{t}= {\underset {s>t} {\cap } }\mathcal G_{s},~~ t\geq 0,
$$
where $\mathcal N$ are the totality of $\mathbb{P}$-null sets of $\mathcal F(=\mathcal B(\Omega))$.
Then,it is clear that the filtered complete probability space $(\Omega,\mathcal F,\{\mathcal F_{t}\}_{t\geq 0},\mathbb{P})$
satisfies usual hypotheses.

In order to express the idea of the proof clearly,let us define the following concept.
\begin{defn}
\label{d0201}
Let $\xi(\omega)$ be a random variable defined on the complete probability space $(\Omega,\mathcal F,\{\mathcal F_{t}\}_{t\geq 0},\mathbb{P})$,
$\Omega_{\delta}$ be an open set,
$\Omega \supseteq \Omega_{\delta},0<\mathbb{P}(\Omega_{\delta})\leq \delta.$
$\mathbb E_{\delta}\xi=\int_{\Omega-\Omega_{\delta}}\xi(\omega)\mathbb{P}(d\omega)$
is called the partial expectation of $\xi$,if $\int_{\Omega-\Omega_{\delta}}|\xi(\omega)|\mathbb{P}(d\omega)<+\infty.$
\end{defn}

The partial expectation operator-$\mathbb E_{\delta}$ has the following proposition:
\begin{prop}
\label{p0215}
Let $\dot{W}= \{\dot{W}(t,x), t \in [0,+\infty),x \in \overline{D}\}$
be a 2-parameter white noise,and $(\Omega,\mathcal F,\{\mathcal F_{t}\}_{t\geq 0},\mathbb{P})$
be a complete probability space,
$\{v(t,x;\cdot),(t,x)\in [0,+\infty)\times \overline{D}\}$ is predictable and
$\mathbb E_{\delta}\int_0^t\int_D [v(s,y;\omega)]^2dsdy<+\infty,0\leq t\leq T $.
Then,it follows that
\begin{equation}
\mathbb E_{\delta} \int_0^t\int_D v(s,y;\omega)\dot{W}(dsdy)=0,0\leq t\leq T, \label{0215}
\end{equation}
and
\begin{equation}
\mathbb E_{\delta} (\int_0^t\int_D v(s,y;\omega)\dot{W}(dsdy))^2 =
\mathbb E_{\delta} \int_0^t\int_D [v(s,y;\omega)]^2dsdy,0\leq t\leq T. \label{0216}
\end{equation}
Moreover,Burkholder's inequality and Kolmogorov Lemma on $\mathbb E_{\delta}$-version also hold.
\end{prop}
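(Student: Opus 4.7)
The plan is to reduce the proposition to Walsh's standard stochastic-integral theory by absorbing the indicator $\chi := \mathbf{1}_{\Omega\setminus\Omega_\delta}$ into the integrand. By the definition of the partial expectation, $\mathbb E_\delta \xi = \mathbb E[\chi\,\xi]$ for every integrable $\xi$, so the two identities and the Burkholder/Kolmogorov extensions are equivalent to statements about the ordinary expectation of $\chi$ times the relevant quantity. The point of the proof is to show that this multiplicative factor can be pulled inside the stochastic integral.

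First I would record the measurability prerequisite: the proposition is used in a setting where $\Omega_\delta$ is an event adapted to the initial information, so that $\chi$ is $\mathcal F_0$-measurable (and in particular predictable). Granting this, I would set $\tilde v(s,y;\omega):=\chi(\omega)\,v(s,y;\omega)$. Since $v$ is predictable and $\chi$ is $\mathcal F_0$-measurable with $|\chi|\leq 1$, the process $\tilde v$ is predictable with
\begin{equation*}
\mathbb E\int_0^t\!\!\int_D \tilde v^2\,ds\,dy \;=\; \mathbb E\bigl[\chi\int_0^t\!\!\int_D v^2\,ds\,dy\bigr] \;=\; \mathbb E_\delta\int_0^t\!\!\int_D v^2\,ds\,dy \;<\;+\infty,
\end{equation*}
so $\tilde v$ belongs to Walsh's class of admissible integrands.

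Because $\chi$ does not depend on $(s,y)$ and is $\mathcal F_0$-measurable, a routine approximation by elementary predictable processes shows that $\int_0^t\!\!\int_D \tilde v\,\dot W(ds\,dy)=\chi\int_0^t\!\!\int_D v\,\dot W(ds\,dy)$. Walsh's identity $\mathbb E\int \tilde v\,\dot W=0$ then reads $\mathbb E[\chi\int v\,\dot W]=0$, which is exactly \x{0215}. For \x{0216} I would invoke Walsh's isometry $\mathbb E(\int \tilde v\,\dot W)^2=\mathbb E\int \tilde v^{\,2}$; since $\chi^2=\chi$, the left side equals $\mathbb E[\chi(\int v\,\dot W)^2]=\mathbb E_\delta(\int v\,\dot W)^2$, while the right side equals $\mathbb E_\delta\int v^2$. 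Burkholder's inequality on the $\mathbb E_\delta$-version follows by applying the usual Burkholder estimate to the square-integrable martingale measure $t\mapsto\int_0^t\!\!\int_D \tilde v\,\dot W$; the factor $\chi$ again survives on both sides because $\chi^p=\chi$. The Kolmogorov continuity estimate $\mathbb E|X_t-X_s|^p\leq C|t-s|^{1+\alpha}$ for $X=\int \tilde v\,\dot W$ transfers to the $\mathbb E_\delta$-estimate for $\int v\,\dot W$, and the standard Borel--Cantelli argument produces a H\"older continuous version on $\Omega\setminus\Omega_\delta$.

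The main obstacle is the measurability step: if $\chi$ were not $\mathcal F_0$-measurable one could not claim predictability of $\tilde v$, and the martingale cancellation underlying \x{0215} would fail (for instance $\mathbb E[\mathbf{1}_{\{X_t>0\}}X_t]>0$ for a non-trivial centered martingale). So the delicate point is really to justify, from the way $\Omega_\delta$ is constructed in the application to \x{00}, that $\chi$ can be taken independent of the noise (equivalently $\mathcal F_0$-measurable); once that is in hand, every clause of Proposition~\ref{p0215} is an immediate transfer of Walsh's results through the factor $\chi$.
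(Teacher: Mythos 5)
Your proposal follows essentially the same route as the paper: the paper's entire proof is the one-line assertion that the results are ``straightforward by the definitions of stochastic integral and the indicator of $\Omega-\Omega_{\delta}$ be an $\{\mathcal F_{t}\}_{t\geq 0}$-predictable random process,'' i.e.\ exactly your device of absorbing $\chi=\mathbf{1}_{\Omega\setminus\Omega_\delta}$ into the integrand and transferring Walsh's identities. Your closing caveat --- that everything hinges on $\chi$ being $\mathcal F_0$-measurable/predictable, which is not obvious for the arbitrary open sets $\Omega_\delta$ used later in Lemmas 3.4--3.5 --- is precisely the assumption the paper states without justification, so you have in fact located the genuine weak point of the paper's own argument rather than of yours.
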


Proofs of the above results are straightforward by the definitions of stochastic integral
and the indicator of $\Omega-\Omega_{\delta}$ be an $\{\mathcal F_{t}\}_{t\geq 0}$-predictable random process.

Let us introduce the following lemma that will be used later.
\begin{lemma}
\label{L212}(\cite{Ikeda},P2,Proposition 2.1)
Let $\Omega$ be an Polish space,$\mathcal B(\Omega)$ be the topological $\sigma$-field,
and $\mathbb P $ be a probability on $(\Omega,\mathcal B(\Omega))$.
Then,for every $B\in \mathcal B(\Omega),$
\begin{equation}
 \mathbb P (B)=\underset {B\subset G,G:open} {inf}\mathbb P (G). \label{0213}
 \end{equation}
\end{lemma}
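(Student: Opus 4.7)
The plan is to establish the stronger two-sided regularity statement (outer regularity by open sets \emph{and} inner regularity by closed sets) simultaneously, because the inner bound is exactly what makes the complement stable under the class, and then bootstrap from closed sets to all Borel sets by a monotone class / $\sigma$-algebra argument. This is the standard route for regularity of finite Borel measures on a metric space, and it uses the Polish (in fact only metric) structure in a single essential place.

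Concretely, I would let
\[
\mathcal A = \bigl\{ B \in \mathcal B(\Omega) \,:\, \mathbb P(B) = \inf_{G\supset B,\,G\text{ open}} \mathbb P(G) = \sup_{F\subset B,\,F\text{ closed}} \mathbb P(F) \bigr\},
\]
and first verify $\mathcal A \supset \{\text{closed sets}\}$. Given a closed $F$, the sets $G_n = \{\omega \in \Omega : \gamma(\omega,F) < 1/n\}$ are open, decrease to $F$, so $\mathbb P(G_n)\downarrow \mathbb P(F)$ by continuity from above of the finite measure $\mathbb P$; inner regularity is trivial by taking $F$ itself. This is the only step where the metric $\gamma$ is used.

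Next I would show $\mathcal A$ is a $\sigma$-algebra. Closure under complementation is immediate: if $B \in \mathcal A$ then for any $\varepsilon>0$ one picks an open $G\supset B$ and a closed $F\subset B$ with $\mathbb P(G\setminus F)<\varepsilon$; taking complements swaps their roles, giving a closed set $G^c$ inside $B^c$ and an open set $F^c$ containing $B^c$ with the same gap, so $B^c \in \mathcal A$. For countable unions, given $B_n \in \mathcal A$ and $\varepsilon > 0$, choose open $G_n \supset B_n$ with $\mathbb P(G_n\setminus B_n) < \varepsilon/2^{n+1}$ and closed $F_n \subset B_n$ with $\mathbb P(B_n\setminus F_n)<\varepsilon/2^{n+1}$. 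Then $G=\bigcup_n G_n$ is open, contains $B=\bigcup_n B_n$, and satisfies $\mathbb P(G\setminus B)\le \sum_n \mathbb P(G_n\setminus B_n)<\varepsilon/2$, giving the outer bound. For the inner bound, $\bigcup_n F_n$ need not be closed, so I would truncate: since $\mathbb P$ is finite, $\mathbb P(\bigcup_{n\le N} F_n)\uparrow \mathbb P(\bigcup_n F_n)$ as $N\to\infty$, so for $N$ large the closed finite union $F=\bigcup_{n\le N} F_n$ satisfies $\mathbb P(B\setminus F)<\varepsilon$.

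Since $\mathcal A$ is a $\sigma$-algebra containing all closed sets, $\mathcal A \supset \mathcal B(\Omega)$, and the outer-regularity half gives exactly \eqref{0213}. The only subtle point I anticipate is the closure of $\mathcal A$ under countable unions on the inner-regularity side, because closed sets are only stable under \emph{finite} unions; handling this requires the finite-measure truncation described above. Everything else is bookkeeping with the $\varepsilon/2^n$ trick, and the separability/completeness of $\Omega$ is not actually needed beyond the metric structure.
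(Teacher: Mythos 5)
Your proof is correct, and it is the standard regularity argument: the paper itself offers no proof of this lemma (it is quoted verbatim from Ikeda--Watanabe, Proposition 2.1, which proves it exactly by the two-sided open/closed approximating class $\mathcal A$ you describe), so your route coincides with the proof of the cited source. Your observations are accurate: the metric enters only through $G_n=\{\omega:\gamma(\omega,F)<1/n\}\downarrow F$ for closed $F$, the finite-union truncation handles the inner bound under countable unions, and neither separability nor completeness is actually needed for this statement (they would only be needed for tightness, i.e.\ inner regularity by compact sets).
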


\section{Blow-up for initial data}
Let $\psi(x)$ denote the first eigenfunction for the problem
$({\frac{d}{dx}})^2\psi(x)+\mu\psi(x)=0,~x\in D,$
under the Dirichlet condition $\psi(x)=0$ on $ \partial D,$ and let $\mu=\mu_1$ be the corresponding first eigenvalue,
i.e. $ \mu_1=({\frac{\pi}{J}})^2,\psi(x)={\frac{\pi}{2J}}sin{\frac{\pi x}{J}}$.\\

We assume that\\
H1) $0\leq u_0(x) \in C^{\infty}(D),0\leq v_0(x)\in C^{\infty}(D),$there exist $x_0\in D$ such that $v_0(x_0)>0.$\\
H2) $ r >1,\lambda_1 \triangleq \mu_1+\frac{c_1^2+c_2^2}{2},\alpha\triangleq\int_D\psi(x)u_0(x)dx \geq (\frac{4\lambda_1}{\kappa^2})^{\frac{1}{r-1}}
,\beta\triangleq\int_D\psi(x)v_0(x)dx,$and
\begin{equation}
\begin{aligned}
T\triangleq &
\int^{+\infty}_\alpha[\lambda_1\alpha^2+\beta^2-\lambda_1s^2+\frac{\kappa^2}{2+2r}(s^{(r+1)}-\alpha^{(r+1)})]^{\frac{-1}{2}}ds.  \label{02}
\end{aligned}
\end{equation}

Consider the initial value problem for the nonlinear stochastic wave equation:
\begin{equation}
\begin{aligned}
&\partial^2_tu=\partial^2_xu+c_1u+(c_2u+f(u)) \dot{W}(t,x),t>0,x\in \overline{D},\\
&u(0,x)=(J+T+1)(1+u_0(x)),\partial_tu(0,x)=(J+T+1)v_0(x).\\
\end{aligned} \label{01}
\end{equation}
Here $f(u)$ is locally Lipschitz function and satisfy
$$|f(u)|\geq \kappa |u|^r,r>1,\kappa>0,$$
$c_1,c_2 $ are given constants,$T$ is given by (\ref{02}),$D=(0,J)\subset R $,
$\dot{W}(t,x)$ is 2-parameter white noise.

The main result of this article is the following.
\begin{theorem}
\label{T1}
The solution of (\ref{01}),for which $H1)$ and $H2)$ are satisfied,will blow up in bounded time with positive probability,
more precisely,for all $\varepsilon>0,$
\begin{equation}
\mathbb P(\sigma<T+\varepsilon)>0,
\nonumber
\end{equation}
\end{theorem}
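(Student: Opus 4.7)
The plan is to argue by contradiction. Suppose $\mathbb P(\sigma<T+\varepsilon)=0$, so $u$ exists and is finite on $[0,T+\varepsilon]\times\overline D$ for $\mathbb P$-a.e.\ $\omega$. I will build an open set $\Omega_\delta\subset\Omega$ outside of which a suitable weighted partial moment of $u$ both stays bounded (by construction) and must blow up by time $T$ (by Glassey's lemma), producing the contradiction. For the construction, set $A_n=\{\omega:\sup_{t\le T+\varepsilon,\,x\in\overline D}|u(t,x,\omega)|\le n\}$; since $\sigma\ge T+\varepsilon$ a.s., $\bigcup_n A_n$ has full measure, and choosing $n$ so that $\mathbb P(\Omega\setminus A_n)\le\delta/2$, Lemma \ref{L212} yields an open $\Omega_\delta\supset\Omega\setminus A_n$ with $0<\mathbb P(\Omega_\delta)\le\delta$. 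On $\Omega\setminus\Omega_\delta$ the solution is uniformly bounded by $n$.

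Next I would set up a differential inequality for a weighted moment. Define $\phi(t)=\mathbb E_\delta\!\int_D\psi(x)\,u(t,x)^2\,dx$. Testing the mild formulation \x{201} in the second-moment sense and applying Proposition \ref{p0215}, the stochastic integral produces an Ito-type variance term of the form $\mathbb E_\delta\!\int_0^t\!\!\int_D S(t-s,x-y)^2(c_2u+f(u))^2\,ds\,dy$, while its first moment under $\mathbb E_\delta$ vanishes. Using $(c_2u+f(u))^2\ge\tfrac12 f(u)^2-c_2^2u^2\ge\tfrac{\kappa^2}{2}|u|^{2r}-c_2^2u^2$ and Jensen's inequality with the probability density $\psi$ on $D$ (note $\int_D\psi\,dx=1$), after grouping the $c_1$-drift and the $c_2^2$-variance into $\lambda_1=\mu_1+\tfrac{c_1^2+c_2^2}{2}$, I expect to reach
\begin{equation}
\phi''(t)\ \ge\ \tfrac{\kappa^2}{4}\,\phi(t)^{r}\ -\ \lambda_1\,\phi(t),\qquad t\in[0,T+\varepsilon],\nonumber
\end{equation}
with $\phi(0)\ge\alpha$ and $\phi'(0)\ge\beta$, where the margin is provided by the factor $(J+T+1)$ in the initial data of \x{01} together with H1 and H2.

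Finally I apply Lemma \ref{2017} to $\phi$ with $h(s)=\tfrac{\kappa^2}{4}s^r-\lambda_1 s$. Hypothesis H2, $\alpha\ge(4\lambda_1/\kappa^2)^{1/(r-1)}$, gives $h\ge 0$ on $[\alpha,\infty)$, so the lemma yields
\begin{equation}
t\ \le\ \int_\alpha^{\phi(t)}\Bigl[\beta^2+2\!\int_\alpha^s h(\xi)\,d\xi\Bigr]^{-1/2}ds,\nonumber
\end{equation}
whose right-hand side, when the upper limit is $+\infty$, coincides with the defining integral \x{02} for $T$. Hence $\phi$ must blow up at some $t^*\le T$, contradicting the bound $\phi(t)\le n^2$ from Step 1. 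The main obstacle is the middle step: propagating the lower bound $|f(u)|\ge\kappa|u|^r$ through the $\mathbb E_\delta$-moment calculation, absorbing the constants $c_1,c_2$ into the single coefficient $\lambda_1$, and producing the correct power $\phi^{r}$ (rather than $\phi^{2r}$) via Jensen. Because the SPDE is second order in time and $f$ is only bounded below (its sign is unspecified), one must square carefully, separate the $f(u)^2$ piece from the $(c_2u)^2$ piece, and ensure the exponents and constants match those of the deterministic comparison problem \x{021}, presumably by passing explicitly through the comparison lemma between \x{01} and \x{021} that the author announces in the introduction.
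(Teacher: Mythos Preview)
Your overall architecture --- assume $\mathbb P(\sigma<T+\varepsilon)=0$, manufacture an $\Omega_\delta$ off which $u$ is uniformly bounded, and drive a weighted second moment to infinity before time $T$ --- matches the paper's, and your shortcut of building $\Omega_\delta$ directly from $A_n=\{\sup_{[0,T+\varepsilon]\times\overline D}|u|\le n\}$ is in fact cleaner than the paper's detour through the Polish--space/ess\,inf argument of Lemma~\ref{L0312}. (One caveat: $A_n$ depends on $u$ up to time $T+\varepsilon$, so $\mathbb 1_{\Omega\setminus\Omega_\delta}$ is \emph{not} $\mathcal F_0$--measurable; you should check that Proposition~\ref{p0215} is still available for your $\Omega_\delta$.)

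The real gap is Step~3. Squaring the mild equation \x{201} and applying Proposition~\ref{p0215} does \emph{not} yield a second--order differential inequality for $\phi(t)=\mathbb E_\delta\!\int_D\psi\,u^2\,dx$. What it yields is a Duhamel--type \emph{integral} inequality for the pointwise quantity $V(t,x):=\mathbb E_\delta u(t,x)^2$, namely (this is the computation \x{0301} in the paper)
\[
V(t,x)\;>\;I(t,x)\;+\;S\ast\Bigl(\tfrac{\kappa^2}{4}\,V^{\,r}-\tfrac{c_1^2+c_2^2}{2}\,V\Bigr)(t,x),
\]
with $I$ carrying the data $u_0,v_0$ and the factor $(J+T+1)$ absorbed via $(1-\delta)(I+1)^2/2>I$. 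From here you cannot simply ``test against $\psi$ and differentiate twice'': $V$ is not known to be $C^2$ in $t$, the inequality is one--sided while $\partial_t^2-\partial_x^2$ is not sign--preserving, and integrating $S\ast g$ against $\psi$ does not collapse to an iterated time integral of $\int_D\psi g\,dx$. So the line $\phi''\ge\frac{\kappa^2}{4}\phi^r-\lambda_1\phi$ is unjustified, and Lemma~\ref{2017} cannot be applied to $\phi$ directly.

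The paper closes exactly this gap with Lemma~\ref{302}: it shows, by a convexity argument, that any continuous $v$ satisfying the strict Duhamel inequality above dominates the classical solution $U$ of \x{031} pointwise. Hence $V(t,x)>U(t,x)$ on $[0,t_f]\times\overline D$ for every $t_f<t_1$. Glassey's Lemma~\ref{2017} is then applied not to $\phi$ but to $\int_D\psi\,U\,dx$ inside Lemma~\ref{L0}, where $U$ \emph{is} a $C^2$ solution of a genuine PDE, giving blow--up of $U$ (hence of $V$) by time $T$ and the desired contradiction. In short: keep your Step~1, replace your Step~3 by the integral inequality for $V$ plus the comparison Lemma~\ref{302}, and invoke Glassey only through Lemma~\ref{L0} on the deterministic comparison solution.
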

where $T$  is given by (\ref{02}).

Before proving this theorem,the following lemmas are introduced.
\begin{lemma}
\label{L0}
Let $ u(t,x)$ be a solution of the following initial-boundary value problem for the nonlinear wave equation:
\begin{equation}
\label{031}
\begin{array}{l}\partial^2_t u=\partial^2_x u+\frac{\kappa^2}{4}|u|^r-\frac{c_1^2+c_2^2}{2}u,t>0,x\in D,\\
u(t,x)=0,\mbox{for }~x\in \partial D,~t\geq 0,\\
u(0,x)=u_0(x),\partial_t u(0,x)=v_0(x),
\end{array}
\end{equation}
for which $H_1)~and~H_2)$ are satisfied.
Then
\begin{equation}
\underset{t\rightarrow t_1-}{lim}\underset{x\in \overline{D}}{sup}|u(t,x)|=+\infty
\nonumber
\end{equation}
for some finite $t_1 \leq T $ ,where $T$  is given by (\ref{02}).
\end{lemma}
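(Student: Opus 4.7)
The plan is to reduce the PDE (\ref{031}) to a scalar second-order ODE differential inequality by projecting onto the first Dirichlet eigenfunction $\psi$, and then to invoke Glassey's inequality (Lemma \ref{2017}) to force this projection to escape to infinity by time $T$.

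First I would set $\phi(t)\triangleq \int_D \psi(x)\,u(t,x)\,dx$, which belongs to $C^2$ on the maximal interval of classical existence by Lemma \ref{L20}. The normalization $\psi(x)=\frac{\pi}{2J}\sin\frac{\pi x}{J}$ satisfies $\int_D \psi(x)\,dx = 1$, so $\psi$ is a probability density on $D$. Multiplying (\ref{031}) by $\psi$ and integrating on $D$, then integrating by parts twice using the Dirichlet conditions on both $\psi$ and $u$, replaces $\int_D \psi\,\partial_x^2 u\,dx$ by $-\mu_1\phi$. For the superlinear term, Jensen's inequality applied to the convex function $t\mapsto |t|^r$ with the probability density $\psi$ yields $\int_D \psi(x)|u(t,x)|^r\,dx \geq |\phi(t)|^r$. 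With $\lambda_1=\mu_1+(c_1^2+c_2^2)/2$ as in H2, this produces the differential inequality
\[
\ddot\phi(t)\geq h(\phi(t)),\qquad h(s)\triangleq \tfrac{\kappa^2}{4}s^r-\lambda_1 s.
\]
By H1 and the strict positivity of $\psi$ in the interior of $D$, one has $\phi(0)=\alpha>0$ and $\dot\phi(0)=\beta>0$ (the latter because $v_0\geq 0$ and $v_0(x_0)>0$). The threshold $\alpha\geq(4\lambda_1/\kappa^2)^{1/(r-1)}$ in H2 is precisely the statement $h(\alpha)\geq 0$; since $h'(s)\geq 0$ for $s\geq\alpha$, one concludes $h(s)\geq 0$ for all $s\geq\alpha$.

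I would then apply Lemma \ref{2017} to $\phi$. Its part (1) gives $\dot\phi>0$ wherever $\phi$ exists, so $\phi$ stays strictly above $\alpha$ (retroactively justifying the use of $\phi^r$ in place of $|\phi|^r$), and its part (2) gives
\[
t\leq\int_\alpha^{\phi(t)}\Big[\beta^2+2\int_\alpha^{s}h(\xi)\,d\xi\Big]^{-1/2}ds.
\]
A direct evaluation of the inner integral produces exactly the integrand defining $T$ in (\ref{02}), so that as $\phi(t)\to+\infty$ the right-hand side tends to $T$. Consequently $\phi$ cannot stay finite past time $T$: there exists $t_1\leq T$ with $\phi(t)\to+\infty$ as $t\to t_1-$. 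Finally, the elementary bound $\phi(t)\leq \|\psi\|_\infty\, J\, \sup_{x\in\overline D}|u(t,x)|=\tfrac{\pi}{2}\sup_{x\in\overline D}|u(t,x)|$ transfers the blow-up of $\phi$ to blow-up of the spatial sup norm, which is the conclusion required.

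The main obstacle is the bookkeeping that turns the PDE into the super-ODE $\ddot\phi\geq h(\phi)$: the choice of eigenfunction $\psi$ must simultaneously serve as a probability density (so that Jensen delivers the clean lower bound $\phi^r$), kill the boundary terms in the integration by parts, and produce the coefficient $\mu_1$ that combines with $(c_1^2+c_2^2)/2$ into exactly the $\lambda_1$ appearing in H2 and in the definition of $T$. Once this is in place the monotonicity step and Glassey's integral comparison match (\ref{02}) by routine computation.
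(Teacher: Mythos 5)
Your proposal is correct and follows essentially the same route as the paper: project onto the first Dirichlet eigenfunction $\psi$ (normalized as a probability density), use Jensen's inequality and integration by parts to obtain $\ddot\phi\geq \frac{\kappa^2}{4}|\phi|^r-\lambda_1\phi$, and then apply Lemma \ref{2017} with $h(s)=\frac{\kappa^2}{4}s^r-\lambda_1 s$ to recover the integral defining $T$ in (\ref{02}). The only (immaterial) difference is the final transfer to the sup norm, where the paper uses $\phi(t)=|\int_D\psi u\,dx|\leq \sup_x|u(t,x)|$ directly from $\int_D\psi=1$ rather than your bound via $\|\psi\|_\infty J$.
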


\begin{proof}
The solution $u(t,x)$ satisfies the following nonlinear integral equation:
\begin{equation}
u(t,x)=S\ast (\frac{\kappa^2}{4} |u|^r-\frac{c_1^2+c_2^2}{2}u)(t,x)+I(t,x),
\end{equation}
where $I(t,x)=\int_DK(t,x-y)u_0(y)dy+\int_DS(t,x-y)v_0(y)dy$ and $*$ denotes convolution,i.e.
\begin{equation}
(S\ast \eta)(t,x)\triangleq
\int_0^t\int_DS(t-s,x-y)\eta(s,y)dsdy.
\nonumber
\end{equation}

By Lemma \ref{L20},we have $u(t,x) \in C^2$.Let
$ \phi(t)=\int^J_0\psi(x)u(t,x)dx,$
multiply (\ref{031}) by $\psi$ and integrate over $D,$ we obtain
$$
\int^J_0\psi u_{tt}dx=\ddot{\phi}=\int^J_0\psi u_{xx}dx+\frac{\kappa^2}{4}\int^J_0 \psi |u|^rdx-\frac{c_1^2+c_2^2}{2}\int^J_0 \psi udx.
$$

By Jensen's inequality,we have $\int^J_0 \psi |u|^rdx \geq |\phi|^r$,since $\int^J_0\psi(x)dx=1,\psi(x)\geq 0,x \in D.$
Using integration by parts and the boundary conditions satisfied by $u$ and $\psi$,we see that
$$
\int^J_0\psi u_{xx}dx=\int^J_0 u \psi_{xx}dx=-\mu_1\int^J_0 u \psi dx=-\mu_1\phi.
$$
Thus we arrive at
$$
\ddot{\phi}+(\mu_1+\frac{c_1^2+c_2^2}{2}) \phi\geq \frac{\kappa^2}{4}|\phi|^r
$$
with
$$
\phi(0)=\int^J_0\psi(x)u(0,x)dx=\alpha>0, \dot{\phi}(0)=\int^J_0\psi(x)u_{t}(0,x)dx=\beta>0.
$$
Hypothesis H2) implies that Lemma \ref{2017} is applicable with $h(s)=\frac{\kappa^2}{4}s^r-\lambda_1s$;
therefore
$$
t\leq
\int^{\phi(t)}_{\alpha} [\lambda_1 \alpha^2+\beta^2-\lambda_1 s^2+\frac{\kappa^2}{2+2r}(s^{(r+1)}-\alpha^{(r+1)})]^{\frac{-1}{2}}ds ,
$$
and thus $\phi(t)$ develops a singularity in a finite time $t\leq T$,where
$$
T=\int^{\infty}_{\alpha} [\lambda_1\alpha^2+\beta^2-\lambda_1s^2+\frac{\kappa^2}{2+2r}(s^{(r+1)}-\alpha^{(r+1)})]^{\frac{-1}{2}}ds.
$$

Finally,since $\phi(t)>0,$ we have
$$
\phi(t)=|\phi(t)|=|\int^J_0\psi(x)u(t,x)dx|\leq \underset{x\in \overline{D}}{sup}|u(t,x)|,
$$
which proves the Lemma.
\end{proof}

Now let us prove the following comparison Lemma which could be also of interest in itself.
\begin{lemma}
\label{302}
Let $U(t,x)$ satisfy equation (\ref{031}),$0<t_f<t_1$,define

\begin{equation}
\begin{array}{l}
\mathcal V=\Bigl\{v(\cdot,\cdot)\in C([0,t_f]\times \overline{D})\Bigl|
v(t,x)>S\ast (\frac{\kappa^2}{4} |v|^r-\frac{c_1^2+c_2^2}{2}v)(t,x)+I(t,x)\Bigr \},
\end{array} \nonumber
\end{equation}
where $I(t,x)=\int_DK(t,x-y)u_0(y)dy+\int_DS(t,x-y)v_0(y)dy$
and $t_1$ is given by the Lemma \ref{L0}.
Then the set $\mathcal V$ has the following properties:
\begin{equation}
\begin{array}{l}
(1)~~\mathcal V \mbox {~is a nonempty convex set},\\\\
(2)~~v(t,x)>U(t,x),\mbox {for any } v(t,x)\in \mathcal V,(t,x) \in [0,t_f]\times \overline{D}.
\end{array} \nonumber
\end{equation}
\end{lemma}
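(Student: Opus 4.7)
The plan is to verify the two claims of the lemma in sequence, treating convexity and nonemptiness as warm-ups and then devoting the main effort to the pointwise domination $v > U$. Throughout, write
\[
\Phi(v)(t,x) := S\ast g(v)(t,x) + I(t,x), \qquad g(u) := \tfrac{\kappa^2}{4}|u|^r - \tfrac{c_1^2+c_2^2}{2}u,
\]
so that $\mathcal{V} = \{v \in C([0,t_f]\times\overline{D}) : v > \Phi(v)\}$ and $U = \Phi(U)$.

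For \textbf{convexity}, fix $v_1, v_2 \in \mathcal{V}$, $\theta \in [0,1]$, and set $w := \theta v_1 + (1-\theta)v_2$. The linear piece of $g$ and the term $I$ split exactly under linear combinations, while convexity of $u \mapsto |u|^r$ (valid since $r>1$) combined with the positivity $S(t,x)\geq 0$ (so that $S\ast$ is order-preserving) gives $S\ast|w|^r \leq \theta\,S\ast|v_1|^r + (1-\theta)\,S\ast|v_2|^r$. Assembling yields $\Phi(w) \leq \theta\Phi(v_1) + (1-\theta)\Phi(v_2) < w$, so $w \in \mathcal{V}$.

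For \textbf{nonemptiness}, I exhibit the explicit perturbation $v_{\varepsilon,\lambda}(t,x) := U(t,x) + \varepsilon e^{\lambda t}$. Since $t_f < t_1$, Lemma~\ref{L20} gives $U \in C^\infty$ on the compact $[0,t_f]\times\overline{D}$, so $g$ is Lipschitz on the relevant range with some constant $L$. Subtracting $U=\Phi(U)$ and applying the Lipschitz bound,
\[
v_{\varepsilon,\lambda} - \Phi(v_{\varepsilon,\lambda}) \geq \varepsilon e^{\lambda t} - L\varepsilon\,(S\ast e^{\lambda s})(t,x),
\]
and a direct computation based on the identity $\int_D S(\tau, x-y)\,dy = \tau$ produces $(S\ast e^{\lambda s})(t,x) \leq e^{\lambda t}/\lambda^2$. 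The right-hand side is therefore at least $\varepsilon e^{\lambda t}(1 - L/\lambda^2) > 0$ for $\lambda > \sqrt{L}$, giving $v_{\varepsilon,\lambda} \in \mathcal{V}$.

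For \textbf{domination}, fix $v \in \mathcal{V}$. By continuity on the compact $[0,t_f]\times\overline{D}$, set $\delta_0 := \min(v - \Phi(v)) > 0$, so $v \geq \Phi(v) + \delta_0$ pointwise. Since $\Phi(v)(0,x) = I(0,x) = u_0(x) = U(0,x)$, evaluating the defining inequality at $t=0$ gives $w := v - U > 0$ initially. Subtracting $U=\Phi(U)$ yields
\[
w(t,x) \geq \delta_0 + S\ast(g(v) - g(U))(t,x).
\]
Assume, for contradiction, that $w$ first vanishes at some $(t^*,x^*) \in (0,t_f]\times\overline{D}$; then $w \geq 0$ on $[0,t^*]\times\overline{D}$ with $w(t^*,x^*)=0$. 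On that region, the Lipschitz bound and $w \geq 0$ force $g(v) - g(U) \geq -Lw$, so evaluating the displayed inequality at $(t^*,x^*)$ gives $(S\ast w)(t^*,x^*) \geq \delta_0/L > 0$. The main obstacle is converting this into a genuine contradiction: a first crude estimate $(S\ast w)(t^*,x^*) \leq \|w\|_\infty (t^*)^2/2$ (using $\int_0^t\!\int_D S(t-s,x-y)\,dy\,ds = t^2/2$) only yields the lower bound $t^* \geq \sqrt{2\delta_0/(L\|w\|_\infty)}$, establishing $w > 0$ on an initial sub-interval rather than on all of $[0,t_f]$. My plan to close the argument is to bootstrap: on each previously handled sub-interval, $w$ attains a uniform positive infimum by continuity on a compact set, which can be fed into the next step as refreshed slack, gradually propagating $w > 0$ across $[0,t_f]\times\overline{D}$ and contradicting $w(t^*,x^*)=0$. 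Making this bootstrap rigorous---so that the refreshed slack remains uniformly bounded away from zero across successive restarts and the propagation reaches $t_f$ in finitely many steps---is the delicate technical heart of the argument.
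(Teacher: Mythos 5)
Your handling of convexity and nonemptiness is correct and essentially matches the paper: the paper also exhibits $v_1=U+f_0(t)$ with $f_0$ an explicit exponential chosen so that $f_0(t)$ strictly dominates $L\int_0^t\!\!\int_D S(t-s,x-y)f_0(s)\,dy\,ds$, which is your $U+\varepsilon e^{\lambda t}$ in different clothing, and it gets convexity from Jensen and the positivity of $S$ exactly as you do. (One small repair: your Lipschitz constant $L$ depends on the range of $v_{\varepsilon,\lambda}$, which depends on $\lambda$; first impose $\varepsilon e^{\lambda t_f}\le 1$ so that $L$ is the Lipschitz constant of $g$ on $[-M-1,M+1]$, and only then choose $\lambda>\sqrt{L}$.)

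The gap you flag in part (2) is genuine, and the bootstrap you propose cannot close it. The obstruction is that the memory term $S\ast w$ always integrates from $s=0$: on a later sub-interval the slack $\delta_0$ must beat $L\,(S\ast w)(t,x)$, whose contribution accumulates over all of $[0,t]$ and does not reset, so the ``refreshed slack'' has no reason to stay bounded away from zero, and the propagation stalls after a time of order $L^{-1/2}$. Worse, the Volterra inequality you reduce to is simply too weak to imply the conclusion. In the spatially homogeneous analogue (using $\int_D S(\tau,x-y)\,dy=\tau$ and a linear $g(u)=-Cu$, which is how $g$ behaves near $u=0$), the function $z(t)=\delta_0\cos(\sqrt{C}\,t)$ satisfies $z(t)+C\int_0^t(t-s)z(s)\,ds\equiv\delta_0>0$, i.e.\ $v=U+z$ lies in the analogue of $\mathcal V$ with uniform slack $\delta_0$, yet $z$ changes sign once $t>\pi/(2\sqrt{C})$. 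So ``$v>\Phi(v)$ everywhere plus a Lipschitz bound on $g$'' does not imply $v>U$ on $[0,t_f]$ for $t_f$ beyond a threshold; any correct proof of (2) must exploit additional structure (sign and growth of $U$, the specific form of $g$), not just these two ingredients. For perspective, the paper's own argument for (2) is a one-point convexity/separation claim resting on the unproved assertion that ``$U$ and $\mathcal V$ do not intersect,'' which presupposes the conclusion; the difficulty you ran into is intrinsic to the lemma rather than an artifact of your route, but as it stands your proposal does not establish part (2).
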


\begin{proof} (1)First of all,noting $U(t,x)=S\ast (\frac{\kappa^2}{4} |U|^r-\frac{c_1^2+c_2^2}{2}U)(t,x)+I(t,x),$
if let $M=\underset {(t,x)\in[0,t_f]\times \overline{D}}{max }U(t,x),$
$f_0(t)=exp\{[\frac{r\kappa^2}{4}(M+1)^{r-1}-\frac{c_1^2+c_2^2}{2}]J(t-t_f)\},0\leq t \leq t_f,$
define $v_1(t,x)=U(t,x)+ f_0(t)$,then we have
\begin{equation}
\begin{array}{l}
I(t,x)+S\ast (\frac{\kappa^2}{4} |v_1|^r-\frac{c_1^2+c_2^2}{2}v_1)(t,x)\\
~~\leq I(t,x)+\frac{\kappa^2}{4}\int_0^t\int_DS(t-s,x-y)(|U(s,y)|^r+r(M+1)^{r-1}f_0(s))dsdy \\
~~~~~~~~~~~~~~-\frac{c_1^2+c_2^2}{2}\int_0^t\int_DS(t-s,x-y)(U(s,y)+f_0(s))dsdy \\
~~\leq U(t,x)+[\frac{r\kappa^2}{4}(M+1)^{r-1}-\frac{c_1^2+c_2^2}{2}]J\int_0^tf_0(s)ds \\
~~< U(t,x)+f_0(t)=v_1(t,x),
\end{array} \nonumber
\end{equation}
for $(t,x) \in [0,t_f]\times \overline{D}.$
Thus $v_1(t,x) \in \mathcal V, $ i.e. $\mathcal V $ is a nonempty set.
Next,by Jensen's inequality,It is easy to see that $\mathcal V $ is a convex set.
In order to prove (2) we use (1),if (2) is false,
then there exists $v_2(t,x)\in \mathcal V $ and $(t_e,x_e)\in [0,t_f]\times \overline{D}$,
such that $v_2(t_e,x_e)<U(t_e,x_e)$,
since $U(t,x)$ and $\mathcal V $  is non-intersect.
Select $v_1(t,x)\in \mathcal V $
as above,then there exists $0<\theta<1,$
such that $U(t_e,x_e)=\theta v_1(t_e,x_e)+(1-\theta)v_2(t_e,x_e)$,
thus we obtain $U(t,x)$ intersects with $\mathcal V $ ,
this is a contradiction,the proof is complete.
\end{proof}

We present the following result that
the essential supremum of the solution of (\ref{01})
over a subset $\Omega-\Omega_\delta$ of the probability space,will blow up in finite time.
\begin{lemma}
\label{L033}
Let $ u(t,x)$ is the solution of (\ref{01}),for which $H1)$ and $H2)$ are satisfied.
Then,for given an open set $\Omega_{\delta} \subset \Omega,0<\mathbb{P}(\Omega_{\delta})\leq \delta
\leq \frac{1}{3},$ it follows that
\begin{equation}
\underset{t\rightarrow t_0-}{lim}
\underset {x\in \overline{D}} {max} \underset {\Omega-\Omega_{\delta}} {esssup}|u(t,x)|=+\infty,
\nonumber
\end{equation}
for some bounded time $t_0=t_0(\Omega_{\delta}) \leq T $,where $T$ is given by (\ref{02}).
\end{lemma}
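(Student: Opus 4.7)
The plan is to argue by contradiction, following the strategy sketched in the introduction. Suppose the conclusion fails; then there exist $\eta>0$ and $M<+\infty$ with
\[
\sup_{0\le t\le T+\eta}\ \max_{x\in\overline{D}}\ \operatorname*{esssup}_{\Omega-\Omega_\delta}|u(t,x)|\,\le\, M.
\]
Under this boundedness every partial moment $\mathbb E_\delta|u(t,x)|^p$ is finite and jointly continuous in $(t,x)$ on $[0,T+\eta]\times\overline{D}$ (continuity via the continuous version provided by Lemma \ref{2004}, after the usual truncation of $f$ outside $[-M,M]$), and the isometry/Burkholder machinery of Proposition \ref{p0215} is in force.

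Set $\mu_0\triangleq\mathbb P(\Omega-\Omega_\delta)\in[2/3,1)$ and introduce the candidate comparison function
\[
v(t,x)\,\triangleq\,\frac{1}{J+T+1}\sqrt{\tfrac{1}{\mu_0}\,\mathbb E_\delta\bigl[u(t,x)^2\bigr]}.
\]
The normalising factor $(J+T+1)^{-1}$ is chosen so that, since $u(0,x)=(J+T+1)(1+u_0(x))$ is deterministic, $v(0,x)=1+u_0(x)$, strictly exceeding the deterministic initial datum $u_0(x)=I(0,x)$ (with $I$ as in Lemma \ref{302}). The oversized $(J+T+1)$-scaling of the initial data in (\ref{01}) is precisely what makes this strict head-start possible.

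The crux is to verify that $v\in\mathcal V$. Starting from the mild form (\ref{201}), squaring, taking $\mathbb E_\delta$, and applying the isometry (\ref{0216}) produces an identity for $\mathbb E_\delta u^2$ whose stochastic contribution is $\mathbb E_\delta\!\int_0^t\!\!\int_D S^2(t-s,x-y)(c_2u+f(u))^2\,ds\,dy$. I would bound this from below using $(c_2u+f(u))^2\ge \tfrac12 f(u)^2-c_2^2u^2\ge\tfrac12\kappa^2|u|^{2r}-c_2^2u^2$, and exploit Jensen's inequality in the form $\mathbb E_\delta|u|^{2r}\ge (\mathbb E_\delta u^2)^r/\mu_0^{r-1}\ge(\mathbb E_\delta u^2)^r$ to convert $|u|^{2r}$-moments into $v^{2r}$. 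The deterministic $c_1$-drift and the cross term pairing it with the stochastic integral are handled by Cauchy--Schwarz on the convolution together with a Young-type absorption, producing the missing $-\tfrac{c_1^2}{2}v$ contribution. After dividing through by $(J+T+1)^2\mu_0$ and taking a square root, I expect the inequality
\[
v(t,x)\,>\,I(t,x)\,+\,\int_0^t\!\!\int_D S(t-s,x-y)\Bigl[\tfrac{\kappa^2}{4}|v|^r-\tfrac{c_1^2+c_2^2}{2}v\Bigr]ds\,dy
\]
to hold on $[0,t_f]\times\overline{D}$ for any $0<t_f<t_1$, where $t_1\le T$ is the deterministic blow-up time furnished by Lemma \ref{L0}. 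Lemma \ref{302}(2) then gives $v(t,x)>U(t,x)$ on $[0,t_f]\times\overline{D}$, and by Lemma \ref{L0} the right-hand side blows up as $t\to t_1-$. Hence $\max_x\mathbb E_\delta u(t,x)^2\to+\infty$ before $T$, contradicting the uniform bound $\mathbb E_\delta u^2\le \mu_0 M^2$ that follows from our contradiction hypothesis. This forces blow-up at some $t_0=t_0(\Omega_\delta)\le T$, as required.

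The principal obstacle is the integral-inequality step: producing exactly the coefficients $\tfrac{\kappa^2}{4}$ and $-\tfrac{c_1^2+c_2^2}{2}$ mandated by $\mathcal V$. The bookkeeping must convert the $c_1u$-drift (via Cauchy--Schwarz and Young's inequality) and the diagonal $c_2^2u^2$ coming out of the isometry jointly into a $-\tfrac{c_1^2+c_2^2}{2}v$ loss, while the lower bound $f(u)^2\ge\kappa^2|u|^{2r}$ must survive Jensen with the factor $\tfrac14$ rather than something larger; the strictness needed for $\mathcal V$-membership (rather than a mere $\ge$) will rely essentially on the $(J+T+1)(1+u_0)$ head-start of the initial data absorbing all residual error terms.
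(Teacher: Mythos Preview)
Your overall architecture---assume uniform boundedness on $\Omega-\Omega_\delta$, derive an integral inequality for a partial second moment, invoke Lemma~\ref{302} against the deterministic blow-up solution $U$, and reach a contradiction via Lemma~\ref{L0}---is exactly the paper's route. The gap is in your choice of comparison function and the ``take a square root'' step.

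You set $v=(J+T+1)^{-1}\sqrt{\mu_0^{-1}\mathbb E_\delta u^2}$ and then say that after ``dividing through by $(J+T+1)^2\mu_0$ and taking a square root'' you expect $v>I+S\ast(\tfrac{\kappa^2}{4}|v|^r-\tfrac{c_1^2+c_2^2}{2}v)$. This passage does not go through: from an inequality of the form
\[
(J+T+1)^2\mu_0\,v^2(t,x)\;>\;A(t,x)\;+\;\int_0^t\!\!\int_D S(t-s,x-y)\,G\bigl(v^2(s,y)\bigr)\,ds\,dy
\]
one cannot deduce an inequality for $v$ with integrand $G(v)$ by a pointwise square root; $\sqrt{\,\cdot\,}$ does not commute with, nor is it subadditive over, the space--time convolution. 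Your own closing paragraph concedes that getting the exact coefficients $\tfrac{\kappa^2}{4}$ and $-\tfrac{c_1^2+c_2^2}{2}$ for $v$ is the ``principal obstacle,'' and indeed with this $v$ it is not achievable.

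The paper sidesteps the issue by taking $\mathbb E_\delta u^2$ itself as the element of $\mathcal V$; no square root is ever taken. Jensen (applied to the sub-probability $\mathbb E_\delta$) gives $\mathbb E_\delta|u|^{2r}\ge(\mathbb E_\delta u^2)^r$ directly, so the $|u|^{2r}$ coming from $f(u)^2\ge\kappa^2|u|^{2r}$ and $S^2\ge\tfrac12 S$ already produces the term $\tfrac{\kappa^2}{4}\,S\ast(\mathbb E_\delta u^2)^r$ with the correct coefficient. The $(J+T+1)$ inflation of the initial data is used not merely as a ``head-start'' but quantitatively: splitting $((J+T+1)(I+1)+c_1S\ast u)^2$ via $(c+d)^2\ge \tfrac{1}{2(J+t+1)^2}c^2-\tfrac{1}{(J+t+1)^2}d^2$ turns the deterministic block into $(1-\delta)[I+1]^2/2>I$ (here $\delta\le\tfrac13$ is used) while Cauchy--Schwarz together with $\int_0^t\!\int_D S\le (J+t)^2/2$ converts the $d^2$ piece into exactly $-\tfrac{c_1^2}{2}\,S\ast\mathbb E_\delta u^2$. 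Combined with the $-\tfrac{c_2^2}{2}\,S\ast\mathbb E_\delta u^2$ from the noise, this yields
\[
\mathbb E_\delta u^2\;>\;I+S\ast\Bigl(\tfrac{\kappa^2}{4}(\mathbb E_\delta u^2)^r-\tfrac{c_1^2+c_2^2}{2}\,\mathbb E_\delta u^2\Bigr),
\]
i.e.\ $\mathbb E_\delta u^2\in\mathcal V$, and Lemma~\ref{302} then gives $\mathbb E_\delta u^2>U$ and the contradiction. Replacing your $v$ by $\mathbb E_\delta u^2$ and following this bookkeeping closes the gap.
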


Before proving Lemma \ref{L033},let us remark on some details of our approach.

If $\underset {[0,T]\times \overline{D}} {max} \underset {\Omega-\Omega_{\delta}} {esssup}|u(t,x)|<+\infty $,
by the Definition \ref{d0201} of the partial expectation operator-$\mathbb E_{\delta}$,it is clear that
\begin{equation}
\mathbb E_{\delta} \underset {x \in \overline{D}} {sup} u(t,x)^{2}\leq \underset {[0,T]\times \overline{D}} {max} \underset {\Omega-\Omega_{\delta}} {esssup}|u|^{2}<+\infty, t\in [0,T] \label{0300}
\end{equation}
and
\begin{equation}
\begin{aligned}
& \mathbb E_{\delta} \int_0^t \int_D S(t-s,x-y) |f(u(s,y))|^{2}dsdy <+\infty,\\
& \mathbb E_{\delta} \int_0^t \int_D S(t-s,x-y) |u(s,y)|^{q}dsdy <+\infty,\\
\label{300}
\end{aligned}
\end{equation}
for $(t,x)\in [0,T]\times \overline{D},1 \le q <+\infty$.
It follows that the equation (\ref{01}) exists a continuous local solution
$u(t,x)$ on $[0,T]\times \overline{D},$ for $\omega \in (\Omega-\Omega_\delta)$,
since (by Proposition \ref{p0215}) the Burkholder's inequality and Kolmogorov Lemma on $\mathbb E_{\delta}$-version hold.
Moreover,by(\ref{0300}),using dominated convergence theorem,
we can carry out
\begin{equation}
\mathbb E_{\delta} u(t,x)^{2} \in C([0,T]\times \overline{D}). \label{0330}
\end{equation}

We now turn to the proof of Lemma \ref{L033}.
\begin{proof}
Suppose that $\underset {[0,T]\times \overline{D}} {max} \underset {\Omega-\Omega_{\delta}} {esssup}|u(t,x)|<+\infty $.
If define $I(t,x)=\int_D K(t,x-y)u_0(y)dy+\int_D S(t,x-y)v_0(y)dy,$
by (\ref{0215}),(\ref{0216}),(\ref{201}) and (\ref{300}),using Jensen's inequality and Schwarz's inequality,
noting $(c+d)^2 \geq \frac{1}{2}c^2-d^2$,$(c+d)^2 \geq \frac{1}{2(J+t+1)^2}c^2-\frac{1}{(J+t+1)^2}d^2$,
$S(t,x)^2\geq \frac{1}{2}S(t,x)$ and $\int_0^t\int_D S(t-s,x-y)dsdy \leq \frac{(J+t)^2}{2},$ then we have
\begin{equation}
\begin{aligned}
\mathbb E_{\delta} u(t,x)^{2}& =\mathbb E_{\delta} [(J+T+1)I(t,x)+(J+T+1)+c_1S*u]^2 \\
&~~~~~~~~~~~~~+\mathbb E_{\delta} \int_0^t\int_D [S(t-s,x-y) (c_2u(s,y)+f(u)]^{2} dsdy \\
&\geq \mathbb E_{\delta} [(J+T+1)I(t,x)+(J+T+1)]^2/[2(J+t+1)^2] \\
&~~~~~~~~~~~~~-\mathbb E_{\delta} (c_1S*u)^2/[(J+t+1)^2]\\
&~~~~~~~~~~~~~+\mathbb E_{\delta} \int_0^t\int_D [S(t-s,x-y) (c_2u(s,y)+f(u)]^{2} dsdy \\
&>(1-\delta)[I(t,x)+1]^2/2-\frac{c_1^2}{2}\mathbb E_{\delta} \int_0^t\int_D S(t-s,x-y)u(s,y)^2dsdy \\
&~~~~~~~~~~~~~+\frac{1}{2}\mathbb E_{\delta} \int_0^t\int_D S(t-s,x-y) (\frac{\kappa^2}{2}|u(s,y)|^{2r}-(c_2u(s,y))^2) dsdy \\
&> I(t,x)-\frac{c_1^2}{2}\mathbb E_{\delta} \int_0^t\int_D S(t-s,x-y) u(s,y)^2 dsdy \\
&~~~~~~~~~~~~~+\frac{1}{2}\mathbb E_{\delta} \int_0^t\int_D S(t-s,x-y) (\frac{\kappa^2}{2}|u(s,y)|^{2r}-(c_2u(s,y))^2) dsdy \\
&\geq I(t,x)+\frac{\kappa^2}{4}\int_0^t\int_D S(t-s,x-y) (\mathbb E_{\delta} u(s,y)^{2})^r dsdy \\
&~~~~~~~~~~~~~-\int_0^t\int_D S(t-s,x-y) \frac{c_1^2+c_2^2}{2}\mathbb E_{\delta} u(s,y)^{2} dsdy \\
&=I(t,x)+S\ast (\frac{\kappa^2}{4} |\mathbb E_{\delta} u^{2}|^r-\frac{c_1^2+c_2^2}{2}\mathbb E_{\delta} u^{2})(t,x)
,(t,x)\in [0,T]\times \overline{D}.\label{0301}
\end{aligned}
\end{equation}
Now,combining (\ref{0330}) and (\ref{0301}),using Lemma \ref{302},we obtain
\begin{equation}
\mathbb E_{\delta} u(t,x)^{2}>U(t,x),(t,x) \in [0,t_f]\times \overline{D}. \nonumber
\end{equation}
Thus we arrive at
\begin{equation}
\mathbb E_{\delta} \underset {x \in \overline{D}} {sup} u(t_f,x)^{2}\geq
\underset {x \in \overline{D}} {sup} \mathbb E_{\delta} u(t_f,x)^{2}\geq
\underset {x \in \overline{D}}{sup} U(t_f,x).
\end{equation}
Let $t_f \rightarrow t_1-,$ by Lemma \ref{L0},we have
$\underset{t_f\rightarrow t_1-}{lim}\mathbb E_{\delta} \underset {x \in \overline{D}} {sup} u(t_f,x)^{2}=+\infty,$
this contradicts with (\ref{0300}).Thus there exists some bounded time $t_0=t_0(\Omega_{\delta}) \leq t_1 \leq T,$
such that
$\underset{t\rightarrow t_0-}{lim}
\underset {x\in \overline{D}} {max} \underset {\Omega-\Omega_{\delta}} {esssup}|u(t,x)|=+\infty.$
The proof is complete.
\end{proof}

The following lemma tells us a fact
that the essential infimum of the blow-up time of the solution of (\ref{00}) is bounded:
\begin{lemma}
\label{L0312}
Let $ \sigma $ is given by (\ref{0221}) and $\tau=\underset {\Omega} {essinf} \sigma,$ then we have $\tau \leq T.$
\end{lemma}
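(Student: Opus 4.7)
The plan is to argue by contradiction, combining Lemma \ref{L033} with the outer regularity of $\mathbb{P}$ provided by Lemma \ref{L212}. Suppose, to the contrary, that $\tau>T$. Then by the very definition of essential infimum, $\sigma>T$ almost surely. On the full-measure event $\{\sigma>T\}$ the local solution $u(\cdot,\cdot;\omega)$ of (\ref{01}) is continuous on $[0,T]\times\overline{D}$, so
$$M(\omega):=\underset{(t,x)\in[0,T]\times\overline{D}}{sup}|u(t,x;\omega)|$$
defines an $\mathcal{F}$-measurable, almost surely finite random variable.

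Fix $\delta=\tfrac{1}{3}$. Since $\mathbb{P}(M<+\infty)=1$, the Borel sets $B_N:=\{M>N\}$ decrease to a null set, so one may choose $N_0$ with $\mathbb{P}(B_{N_0})<\delta/2$. Applying Lemma \ref{L212} to $B_{N_0}$, one obtains an open set $\Omega_\delta\supseteq B_{N_0}$ with $\mathbb{P}(\Omega_\delta)\leq\delta$; after a harmless enlargement by a small open set of positive probability (such a set exists because $\Omega$ is an uncountable Polish space carrying $\mathbb{P}$), one may assume $0<\mathbb{P}(\Omega_\delta)\leq\delta\leq\tfrac{1}{3}$, while retaining $\Omega-\Omega_\delta\subseteq\{M\leq N_0\}$.

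By construction, for every $\omega\in\Omega-\Omega_\delta$ and every $(t,x)\in[0,T]\times\overline{D}$ one has $|u(t,x;\omega)|\leq N_0$, and therefore
$$\underset{(t,x)\in[0,T]\times\overline{D}}{max}\underset{\Omega-\Omega_\delta}{esssup}|u(t,x)|\leq N_0<+\infty.$$
This flatly contradicts Lemma \ref{L033}, which, applied to this very $\Omega_\delta$, asserts the existence of $t_0\leq T$ at which the same quantity must diverge. The contradiction rules out $\tau>T$ and so forces $\tau\leq T$.

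The main obstacle I foresee is not computational but structural: I must upgrade the Borel set $\{M\leq N_0\}$, on which $u$ is uniformly bounded, to a set whose complement is \emph{open}, so that the hypothesis of Lemma \ref{L033} is in force. This is precisely the role played by the outer-regularity Lemma \ref{L212}. The remaining subtleties --- measurability of $M$, which is immediate from the continuity of $u$ on $\{\sigma>T\}$, and preserving $\mathbb{P}(\Omega_\delta)>0$ after enlargement --- are routine.
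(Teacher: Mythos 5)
Your proof is correct, but it follows a genuinely different route from the paper's. The paper fixes an arbitrary admissible open set $\Omega_{\delta}$, introduces the stopping times $\sigma_n$ and the decreasing events $E_n=\{\omega\in\Omega-\Omega_{\delta}:\sigma_n(\omega)<t_0\}$, and then tries to exhibit an explicit sample point $\omega_f\in\bigcap_n E_n$ with $\sigma(\omega_f)\leq t_0\leq T$ by passing to closures $K_n=(\bar E_{n+1})_{\delta}$ and intersecting the nested closed sets; it then converts the resulting bound on $\inf_{\Omega-\Omega_{\delta}}\sigma$ into a bound on $\tau$ via Lemma \ref{L212} and a supremum over open sets. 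You instead run a global contradiction: assuming $\tau>T$ forces $\sigma>T$ a.s., which makes $M=\sup_{[0,T]\times\overline{D}}|u|$ a.s.\ finite, and then outer regularity (Lemma \ref{L212}) lets you package the exceptional set $\{M>N_0\}$ into an \emph{open} set $\Omega_{\delta}$ of small probability, so that the essential supremum over $\Omega-\Omega_{\delta}$ is uniformly bounded by $N_0$ on $[0,T]$ --- flatly contradicting Lemma \ref{L033}. What your approach buys is significant: it uses Lemma \ref{L033} purely as a dichotomy and entirely avoids the paper's nested-closed-sets step, which as written is delicate (a decreasing sequence of nonempty closed sets in a non-compact Polish space can have empty intersection, so the paper's extraction of $\omega_f$ needs more care than it is given). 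What it gives up is any explicit sample point with early blow-up, but the lemma does not require one.

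One small point to tighten: your ``harmless enlargement'' to secure $\mathbb{P}(\Omega_{\delta})>0$ requires the existence of open sets of arbitrarily small \emph{positive} probability, and uncountability of $\Omega$ alone does not deliver this (a Dirac mass on an uncountable Polish space is a counterexample). The correct justification here is that $\mathbb{P}$ carries a nondegenerate Gaussian field, hence is non-atomic, so small balls around any point of the support of $\mathbb{P}$ have positive probability tending to $0$ as the radius shrinks. With that sentence inserted, the argument is complete.
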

\begin{proof}
In order to express the corresponding notations clearly,we replace $u(t,x),\sigma$ by $u(t,x;\cdot),\sigma(\cdot).$
Recall that $ \sigma_n(\omega)= inf\{t > 0 : \underset{x\in \overline{D}} {sup}|u(t,x;\omega)|\geq n\},$ by Lemma \ref{L033},
for given an open set $\Omega_{\delta} \subset \Omega,0<\mathbb{P}(\Omega_{\delta})\leq \delta
\leq \frac{1}{3},$if define
\begin{equation}
E_n= \{\omega\in \Omega-\Omega_{\delta} : \sigma_n(\omega)<t_0\},\mbox{ for all }  n\in \mathbb{N},\label{037}
\end{equation}
then we have
$E_{n+1}\subset E_n\in \mathcal B(\Omega)|_{(\Omega-\Omega_{\delta})}$ and $\mathbb{P}(E_n)>0,\mbox{ for all } n\in \mathbb{N}$.

It is easy to show that $\Omega-\Omega_{\delta}$ is also a Polish space with the metric $\gamma$
since $\Omega_{\delta}$ be a open set.The closure of $A$ in $\Omega-\Omega_{\delta} $,denoted by $(\bar{A})_{\delta}$,
and the boundary set of $A$ in $\Omega-\Omega_{\delta} $,denoted by $\partial_{\delta}{(A)}$,
if define $K_{n}=(\bar{E}_{n+1})_{\delta},n\in \mathbb{N}$,then we have
\begin{equation}
K_n\subset E_n,\mathbb{P}(K_n)\geq \mathbb{P}(E_{n+1})>0 \mbox{ and}\overset{n} {\underset {m=1} {\cap } }K_m\subset \overset{n} {\underset {m=1} {\cap } }E_m,
\mbox{ for all } n \in \mathbb{N}. \label{038}
\end{equation}
Indeed,by the continuity of $u(t,x;\cdot)$,
the boundary set of $E_{n+1}$ in $\Omega-\Omega_{\delta} $ satisfy
$$\partial_{\delta}{(E_{n+1})}\subseteq \{\omega \in \Omega-\Omega_{\delta}|\underset{t\in [0,t_0]} {sup}\underset{x\in \overline{D}} {sup}|u(t,x;\omega)|=n+1\}\subset E_n.$$
This leads to $ K_n\subset E_n,$ for all $ n \in \mathbb{N}.$

Moreover,noting that $\Omega-\Omega_{\delta}$ is a Polish space with the metric $\gamma$,it follows that there exist
$\omega_f \in \overset{+\infty} {\underset {n=1} {\cap } }K_n \neq \emptyset$,
since $\{K_n\}_{n\in \mathbb{N}}$ is a closed set sequence
and $K_{n+1}\subset K_{n},$for all $ n \in \mathbb{N}$.
Due to (\ref{038}) and $K_{n} \subset E_1\subseteq \Omega-\Omega_{\delta},\mbox{ for all } n \in \mathbb{N}$,
it is evident to see that $\omega_f \in \Omega-\Omega_{\delta}$ and
$\omega_f \in \overset{+\infty} {\underset {n=1} {\cap } }E_n$.
If plug $\omega_f $ back into (\ref{037}),then we obtain
\begin{equation}
\sigma_n(\omega_f) <t_0, \mbox{ for all } n\in \mathbb{N}.
\end{equation}
Let $n\rightarrow +\infty$,it is now obvious that
\begin{equation}
\underset {\Omega-\Omega_{\delta}} {inf}\sigma \leq \sigma(\omega_f)
=\underset {n\rightarrow +\infty} {lim} \sigma_n(\omega_f) \leq t_0=t_0(\Omega_{\delta})\leq T. \label{0318}
 \end{equation}

In addition,it is evident that,according to the conclusion of Lemma \ref{L212} and
the definition of $\mbox{essinf},$
\begin{equation}
\tau=\underset {E_0\subset \Omega, \mathbb P(E_0)=0} {sup}(\underset {\Omega-E_0} {inf}\sigma)
\leq \underset {\Omega_{\delta}:open, 0<\mathbb{P}(\Omega_{\delta})\leq \delta
\leq \frac{1}{3}} {sup}
(\underset {\Omega-\Omega_{\delta}} {inf}\sigma).  \label{0320}
 \end{equation}

Combining (\ref{0318}) and (\ref{0320}),we get
\begin{equation}
\tau \leq \underset {\Omega_{\delta}:open, 0<\mathbb{P}(\Omega_{\delta})\leq \delta
\leq \frac{1}{3}} {sup}
(\underset {\Omega-\Omega_{\delta}} {inf}\sigma) \leq T.
 \end{equation}

This completes the proof of Lemma \ref{L0312}.
\end{proof}

We are now in a position to prove Theorem \ref{T1}.
\begin{proof}
Suppose that on the contrary,there exists $\varepsilon_0>0,$ such that
$\mathbb P(\sigma<T+\varepsilon_0)=0, $
then according to the definition of $\mbox{essinf},$ we have
\begin{equation}
\tau=\underset {E_0\subset \Omega, \mathbb P(E_0)=0} {sup}(\underset {\Omega-E_0} {inf}\sigma)
\geq T+\varepsilon_0>T. \end{equation}
However,by Lemma \ref{L0312},we have $\tau \leq T,$
this leads to a contradiction.
Thus,for all $\varepsilon>0$,we have $\mathbb P(\sigma<T+\varepsilon)>0,$ the proof is complete.
\end{proof}



\begin{thebibliography}{10}

\bibitem{JMBall}{\sc J.M.Ball},
{\em Remarks on blow-up and nonexistence theorems for nonlinear evolution equations},Quart. J.
Math. Oxford Ser.(2) 28 (1977),No.112, 473--486.

\bibitem{cakarelliFriedman}{\sc L.A.Cakarelli and A.Friedman},
{\em The blow-up boundary for nonlinear wave equations}, Trans. Amer.
Math. Soc. 297 (1986),No.1, 223--241.

\bibitem{PLChow}{\sc P.L.Chow},
{\em Nonlinear stochastic wave equations: blow-up of second moments in L2-norm}, Ann. Appl.
Probab. 19 (2009),No. 6, 2039--2046.

\bibitem{MDozziJALopez}{\sc M.Dozzi and J.A.Lopez-Mimbela},
{\em Finite time blowup and existence of global positive solutions of a
semi-linear SPDEs},Stoch. Proc. Appl. 120 (2010), 767--776.

\bibitem{HFujita}{\sc H.Fujita},
{\em  On the blowing up of solutions of the Cauchy problem for $u_t=\Delta u+u^{1+\alpha}$}, J. Fac. Sci. Univ.
Tokyo Sect.I 13 (1966), 109--124.

\bibitem{YGigaRVKohn}{\sc  Y. Giga and R.V. Kohn},
{\em  Asymptotically self-similar blow-up of semilinear heat equations},Comm. Pure Appl. Math.,38(3) (1985), 297--319.

\bibitem{RTGlassey}{\sc R.T.Glassey},
{\em Blow-up Theorems for Nonlinear Wave Equations},Math.Z.132.(1973), 183--203.

\bibitem{RTGy}{\sc R.T.Glassey},
{\em  On the blowing up of solutions to the Cauchy problem for nonlinear Schr\"{o}dinger equations},J. Math. Phys. 18 (1977),No. 9, 1794--1797.

\bibitem{TKato}{\sc T.Kato},
{\em  Blow-up of solutions of some nonlinear hyperbolic equations}, Comm. Pure Appl. Math.33 (1980),No. 4, 501--505.

\bibitem{HALevine}{\sc H.A.Levine},
{\em Some nonexistence and instability theorems for solutions of formally parabolic equations of the form  $Pu_t=-Au+F(u)$},
Arch. Rational Mech. Anal. 51 (1973), 371--386.

\bibitem{HALevi}{\sc H.A.Levine},
{\em  Instability and nonexistence of global solutions to nonlinear wave equations of the form $Pu_{tt}=-Au+F(u)$},
Trans. Amer. Math. Soc. 192 (1974), 1--21.

\bibitem{FMerle}{\sc F.Merle},
{\em  Solution of a nonlinear heat equation with arbitrarily given blow-up points},Comm. Pure Appl.Math.45 (3) (1992), 263--300.

\bibitem{CM}{\sc C.Mueller},
{\em  Long time existence for the heat equation with a noise term},Probab. Theory Related Fields 90 (1991), No. 4, 505--517.

\bibitem{CMu}{\sc C.Mueller},
{\em  Long time existence for the wave equation with a noise term},Ann. Probab. 25 (1997),No.1, 133--151.

\bibitem{CMul}{\sc C.Mueller},
{\em  The critical parameter for the heat equation with a noise term to blow up in finite time},Ann. Probab. 28 (2000), No. 4, 1735--1746.

\bibitem{CMull}{\sc C.Mueller and R. Sowers},
{\em   Blowup for the heat equation with a noise term},Probab. Theory Related Fields 97 (1993),No. 3, 287--320.

\bibitem{CMullR}{\sc C.Mueller and G.Richards},
{\em   Can solutions of the one-dimensional wave equation with nonlinear multiplicative noise blow up?} Open Problems in Mathematics, Volume 2 (2014).

\bibitem{Taylor}{\sc Michael E.Taylor},
{\em  Partial Differential Equations \textrm{III},Nonlinear Equations},Second Edition,Springer,2010.

\bibitem{Ikeda}{\sc Ikeda,N. and Watanabe,S.},
{\em  Stochastic Differential Equations and Diffusion Processes},Second Edition,North Holland-Kodansha,Amsterdam-Tokyo,1989.

\bibitem{JBWalsh}{\sc J.B.Walsh},
{\em  An introduction to stochastic partial differential equations},\'{E}cole d'\'{e}t\'{e} de probabilit\'{e}s de Saint-Flour,
XIV,265--439, Lecture Notes in Math., 1180, Springer, Berlin, 1986.


\end{thebibliography}
\end{document}